\documentclass[bjps]{imsart}

\RequirePackage[OT1]{fontenc}
\usepackage{amsthm,amsmath,natbib,amsfonts,amssymb}
\RequirePackage{hyperref}



\startlocaldefs
\numberwithin{equation}{section}
\theoremstyle{plain}
\newtheorem{thm}{Theorem}[section]
\newtheorem{defi}{Definition}[section]
\endlocaldefs

\begin{document}

\begin{frontmatter}
\title{Nonparametric Tests for Bivariate Stochastic Dominance without Continuity Assumptions}
\runtitle{Tests for Bivariate Stochastic Dominance}

\begin{aug}

\author{\fnms{Luciano Alejo} \snm{Perez}\thanksref{a}%
\ead[label=e3]{luperez@itba.edu.ar}}%


\runauthor{Luciano Alejo Perez}

\affiliation[a]{Instituto Tecnol\'ogico de Buenos Aires}

\address{Departamento de Matem\'atica,\\
Escuela de Ingenier\'ia y Gesti\'on,\\
Instituto Tecnol\'ogico de Buenos Aires,\\
\printead{e3}}

\end{aug}

\begin{abstract}

The use of Kolmogorov-Smirnov-type statistics for testing stochastic dominance goes back to \cite{mcfadden}. In this paper we extend the approach of \cite{barret1} to the bivariate case, without the assumption of absolute continuity for the underlying distributions. Using empirical processes theory and bootstrap techniques we obtain consistent nonparametric tests for bivariate first and second order stochastic dominance, over several modularity classes of test functions. This tests are in turn useful tools in applied fields such as multidimensional economic inequality as shown by \cite{perez1}.

\end{abstract}

\begin{keyword}[class=MSC]
\kwd[Primary ]{62G30}
\kwd[; secondary ]{62P20}
\end{keyword}

\begin{keyword}
\kwd{multivariate stochastic dominance}
\kwd{Kolmogorov-Smirnov tests}
\kwd{bootstrap}
\kwd{empirical processes}
\end{keyword}


\end{frontmatter}

\section{Introduction}

Since its introduction by \cite{hadarrussell69}, stochastic dominance has been an important tool both in risk theory and economics. The work of \cite{atkinson70} introduced the concept of stochastic dominance in the context of income inequality, where it has  played ever since a prominent role, given by the welfare interpretation of stochastic dominance based on Bergson-Samuelson functionals. 

In general terms, stochastic dominance is a mathematical rule for ordering random prospects based on their expected values over certain classes of test functions. For example, given random variables $X$ and $Y$ with cdfs $F_{X}$ and $F_{Y}$ respectively, with common compact support $\bar{U}$, and a class $\Phi$ of real valued functions, we say there is stochastic dominance of $X$ over $Y$ for the class $\Phi$, if for every $\phi\in\Phi$ we have:

\begin{equation}
 \int_{\mathbb{R}}\phi(t)dF_{X}\geq\int_{\mathbb{R}}\phi(t)dF_{Y}
\end{equation}

The most important classes of functions in the univariate case are the class $\Phi_{1}$ of $\mathcal{C}^{1}$ increasing functions and $\Phi_{2}$ of $\mathcal{C}^{2}$ increasing and concave functions. If $X$ dominates $Y$ over $\Phi_{1}$ we say that there is first order stochastic dominance, and we denote it by $XSD_{1}Y$. If $X$ dominates $Y$ over $\Phi_{2}$ we say that there is second order stochastic dominance, denoted $XSD_{2}Y$. 

\cite{hadarrussell69} provided sufficient conditions for first and second order dominance. If we assume (without loss of generality) that $F_{X}$ and $F_{Y}$ have support $\bar{U}=[0,1]$ we have:

\begin{eqnarray}
\label{suffSD1D1}F_{X}(s)\leq F_{Y}(s) & \forall s\in[0,1]\Rightarrow & XSD_{1}Y\\
\label{suffSD1D2}\int_{0}^{t}F_{X}(s)ds\leq \int_{0}^{t}F_{X}(s)ds & \forall t\in[0,1]\Rightarrow & XSD_{2}Y 
\end{eqnarray}

As conditions \ref{suffSD1D1} and \ref{suffSD1D2} are obviously also necessary, we have equivalence of those conditions and stochastic dominance. The importance of this conditions lies on the fact that they involve a single inequality for cdfs (or its integrals) as opposed to the definition of stochastic dominance which involves an infinite set of inequalities (one for each test function $\phi$).

This in turn allows for an easy definition of univariate stochastic dominance of arbitrary order using the following sequence of integral operators:

\begin{equation}\label{jthordSD}
\mathcal{T}_{1}(z,F_{X})=F_{X}(z) \hspace{1cm} \mathcal{T}_{j+1}(z,F_{X})=\int_{0}^{z}\mathcal{T}_{j}(t,F_{X})dt 
\end{equation}

We say that there is \textit{j-th order stochastic dominance} of $X$ over $Y$ if $\mathcal{T}_{j}(z,F_{X})\leq\mathcal{T}_{j}(z,F_{Y})$. 

Additionally, and perhaps more important in practice, conditions \ref{suffSD1D1} and \ref{suffSD1D2} involve only cdfs as inputs, which makes easier to test them statistically. 

The idea of using Kolmogorov-Smirnov-type statistics to test for univariate stochastic dominance of $X$ over $Y$ comes from \cite{mcfadden} who stated the null hypothesis $H_{0}: F_{X}(z)\leq F_{Y}(z)$ $\forall z\in [0,1]$ against alternative $H_{1}: F_{X}(z)>F_{Y}(z)$ for some $z\in [0,1]$.  

As dominance condition $F_{X}(z)\leq F_{Y}(z)$ $\forall z\in [0,1]$ is equivalent to $\sup_{z\in[0,1]}{(F_{X}-F_{Y})(z)}\leq0$ it is natural to search for an empirical or \textit{plug-in} analogue of this inequality.

Using same size samples $(x_{1},...,x_{n})$ and $(y_{1},...,y_{n})$ to construct empirical cdfs $F_{X}^{n}$ and $F_{Y}^{n}$ we can define $D_{n}(z)=\sqrt{n}(F_{X}^{n}(z)-F_{Y}^{n}(z))$ and then a two sample test statistic for $H_{0}$ is:

\begin{equation}\label{KSSD1D1}
D_{n}^{*}=\max_{w\in [0,1]}D_{n}(w) 
\end{equation}

The only information we need then to carry on testing $XSD_{1}Y$ in the univariate case is the asymptotic distribution of test statistic $D_{n}^{*}$, which \cite{mcfadden} shows to be of Smirnov type (assuming continuity for $F_{X}$ and $F_{Y}$). The \textit{pivotality} of Kolmogorov-Smirnov statistics such as \ref{KSSD1D1}, that is, its independence on the underlying distributions, is the key factor here, as those distributions are generally unknown. 

As proven by \cite{simpson} this invariance property for the asymptotic distribution of Kolmogorov-Smirnov statistics is lost in the two dimensional case.  On the other hand \cite{schmid} shows how discontinuity accounts for loss of pivotality even for the univariate case. To work around these difficulties we'll have to use empirical processes theory and bootstrap techniques in Sections \ref{SD1} and \ref{SD2}. 

For second order univariate stochastic dominance \cite{mcfadden} proposes the test statistic: 

\begin{equation}\label{KSSD2D1}
S_{n}^{*}=\max_{w\in [0,1]}S_{n}(w) 
\end{equation}

where $S_{n}(w)=\sqrt{n}\int_{0}^{w}(F_{Y}^{n}(z)-F_{X}^{n}(z))dz$, but didn't completely characterize its asymptotic distribution.

\cite{barret1} revisited the use of Kolmogorov-Smirnov-type statistics for testing j-th order stochastic dominance. In the same way as \cite{mcfadden}, they start with empirical or plug-in versions of the integral operators \ref{jthordSD} and to test the null hypothesis of stochastic dominance, they used general two-sample test statistics, allowing for different sample sizes. To characterize limit distributions of their test statistics they used both parametric bootstrap simulations and empirical bootstrap, obtaining \textit{data dependent p-values}. 

Our goal in Sections \ref{SD1} and \ref{SD2} is to develop bivariate first and second order stochastic dominance tests for distributions with compact support\footnote{Compact support for underlying distributions is not essential in first and second order dominance. For higher dominance orders, however, integral operators as in \ref{jthordSD} are in general unbounded and need to be dealt with using additional tools such as weighted Kolmogorov-Smirnov statistics as \cite{horvath}.} in some sense extending \cite{barret1} univariate tests, but using empirical bootstrap theory as in Section 3.7 of \cite{vanderV}. 

\section{Modularity Classes and Bivariate Stochastic Dominance}\label{classes}

Given a class $\Phi=\{\phi\}$ of continuous functions $\phi:\mathbb{R}^{2}\rightarrow\mathbb{R}$ we say that there is stochastic dominance of random vector $(X_{1},Y_{1})$ over $(X_{2},Y_{2})$ if for all $\phi\in\Phi$ we have $E(\phi(X_{1},Y_{1}))\geq E(\phi(X_{2},Y_{2}))$. 

If we assume that $(X_{1},Y_{1})$ and $(X_{2},Y_{2})$ have a cdfs $F_{1}$ and $F_{2}$ respectively, with common compact support $U$, we can put dominance in terms of integrals: 

\begin{equation}
 \int_{U}\phi(s,t)dF_{1}\geq\int_{U}\phi(s,t)dF_{2}\hspace{0.3cm}\forall\phi\in\Phi
\end{equation}

As in the univariate case, to obtain sufficient conditions for dominance in terms of inequalities involving only cdfs and it's integrals, we have to restrict our attention to some special classes of test functions. 

Following \cite{atkinson82} and \cite{perez2} we define first order modularity classes\footnote{Modularity conditions can be stated more generally without differentiability assumptions and conditions \ref{mod1} shown to be equivalent for the case of $\mathcal{C}^{2}$ functions. See \cite{topkis} or \cite{perez1}}.  

\begin{defi}\label{mod1}
Let $\Omega\subseteq\mathbb{R}^{2}$ be an open connected set and let $\phi:\mathbb{R}^{2}\rightarrow\mathbb{R}\in\mathcal{C}^{2}(\Omega)$ be an increasing function on each of its arguments. We say that:  
\begin{enumerate}
 \item $\phi\in\Phi^{+}$ if $\forall(x,y)\in\Omega$ we have $\frac{\partial^{2}\phi}{\partial x\partial y}(x,y)\geq0$.
 \item $\phi\in\Phi^{-}$ if $\forall(x,y)\in\Omega$ we have $\frac{\partial^{2}\phi}{\partial x\partial y}(x,y)\leq0$.
\end{enumerate}
In the first case we say that $\phi$ is ``supermodular'' while in the second case we call it ``submodular''. \hfill$\square$
\end{defi}

In the context of microeconomics, if we think of $\phi$ as a utility function , the condition of supermodularity accounts for \textit{strategic complementarity} of input goods (see \cite{topkis}). This is what makes bivariate stochastic dominance a useful tool in welfare economics as shown by \cite{perez1}. 

For any cdf $F$ with support $\bar{U}=[0,1]\times[0,1]$, we compute its marginal distributions $F^{X}(x)=F(x,1)$ and $F^{Y}(y)=F(1,y)$. Following the notation of \cite{atkinson82} we then define the associated function $K(x,y)=-(F(x,y)-F^{X}(x)-F^{Y}(y))$.
 
Given random vectors $(X_{1},Y_{1})$ and $(X_{2},Y_{2})$ with cdfs $F_{1}$ and $F_{2}$ and $(s,t)\in\bar{U}$ we denote $\Delta F(s,t)=F_{1}(s,t)-F_{2}(s,t)$,  $\Delta F^{X}(s)=F_{1}^{X}(s)-F_{2}^{X}(s)$, $\Delta F^{Y}(t)=F_{1}^{Y}(t)-F_{2}^{Y}(t)$  and $\Delta K(s,t)=K_{1}(s,t)-K_{2}(s,t)$. 

\cite{hadarrussell74} provided sufficient conditions for stochastic dominance of vector $(X_{1},Y_{1})$ over $(X_{2},Y_{2})$ over modularity classes $\Phi^{+}$ and $\Phi^{-}$ assuming absolutely continuous distributions. \cite{perez2} proved that those conditions are sufficient for the general case where underlying distributions are only assumed to have compact support.

This conditions are analogue to univariate sufficient conditions of \cite{hadarrussell69} in the sense that they involve just cdfs $F_{1}$ and $F_{2}$ and its integrals. For first order bivariate stochastic dominance we can state them as in \cite{perez1}:

\begin{eqnarray}\label{sd1cond}
        \Delta F(s,t)\leq0 \hspace{0.1cm}\forall (s,t)\in\bar{U} \Rightarrow F_{1}SD_{1}F_{2}\hspace{0.1cm}\text{over}\hspace{0.1cm}\Phi^{-}\\
        \Delta K(s,t)\leq0, \Delta F^{X}(s),\Delta F^{Y}(t)\leq0\hspace{0.1cm}\forall (s,t)\in\bar{U} \Rightarrow F_{1}SD_{1}F_{2}\hspace{0.1cm}\text{over}\hspace{0.1cm}\Phi^{+}
 \end{eqnarray}

Note that the specific marginal conditions are only relevant in the supermodular case, as they are trivial in the submodular case.

To obtain sufficient conditions for bivariate second order dominance we first have to set our higher order modularity classes. We follow \cite{perez2} and define: 

\begin{defi}\label{mod2}
Let $\Omega\subseteq\mathbb{R}^{2}$ be an open connected set and let $\phi:\mathbb{R}^{2}\rightarrow\mathbb{R}\in\mathcal{C}^{2}(\Omega)$ be an increasing function on each of its arguments. We say that:  
\begin{enumerate}
 \item $\phi\in\Phi^{--}$ if $\phi\in\Phi^{-}$ and $\forall(x,y)\in\Omega$ we have:
\begin{eqnarray}
  \frac{\partial^{2}\phi}{\partial x^{2}}(x,y),\frac{\partial^{2}\phi}{\partial y^{2}}(x,y)\leq0  \\ 
  \frac{\partial^{3}\phi}{\partial x^{2}\partial y}(x,y),\frac{\partial^{3}\phi}{\partial x \partial y^{2}}(x,y)\geq0 \\
  \frac{\partial^{4}\phi}{\partial x^{2}\partial y^{2}}(x,y)\leq0   
\end{eqnarray}
\item $\phi\in\Phi^{++}$ if $\phi\in\Phi^{+}$ and $\forall(x,y)\in\Omega$ we have:
\begin{eqnarray}
  \frac{\partial^{2}\phi}{\partial x^{2}}(x,y),\frac{\partial^{2}\phi}{\partial y^{2}}(x,y)\leq0  \\ 
  \frac{\partial^{3}\phi}{\partial x^{2}\partial y}(x,y),\frac{\partial^{3}\phi}{\partial x \partial y^{2}}(x,y)\leq0 \\
  \frac{\partial^{4}\phi}{\partial x^{2}\partial y^{2}}(x,y)\geq0   
\end{eqnarray}
\end{enumerate}
\hfill$\square$
\end{defi}

To establish again sufficient conditions involving a single inequality we have to to define integral functionals:

\begin{eqnarray}\label{intops}
H(x,y,F)=\int_{0}^{x}\int_{0}^{y}F(s,t)dtds\\
L(x,y,F)=\int_{0}^{x}\int_{0}^{y}K(s,t)dtds
\end{eqnarray}

Those functionals are defined for any cdf $F$ with support $\bar{U}$ and in particular with subindex 1 and 2 we denote its analogue for $F_{1}$ and $F_{2}$. 

We are also going to need marginal functionals defined as:

\begin{eqnarray}\label{intopsmargin}
H^{X}(x,F)=\int_{0}^{x}F^{X}(s)ds\\
H^{Y}(y,F)=\int_{0}^{y}F^{Y}(t)dt
\end{eqnarray}

We set difference operators as before: $\Delta H(s,t)=H_{1}(s,t)-H_{2}(s,t)$, $\Delta H^{X}(s,t)=H_{1}^{X}(s,t)-H_{2}^{X}(s,t)$, $\Delta H^{Y}(s,t)=H_{1}^{Y}(s,t)-H_{2}^{Y}(s,t)$ and $\Delta L(s,t)=L_{1}(s,t)-L_{2}(s,t)$. The following conditions were proven to be sufficient by \cite{hadarrussell74} for the absolutely continuous case and by \cite{perez2} for the general compact support case:

\begin{eqnarray}\label{sd2cond}
        \Delta H\leq0, \Delta H^{X}\leq0, \Delta H^{Y}\leq0\hspace{0.1cm}\forall (s,t)\in\bar{U} \Rightarrow  F_{1}SD_{2}F_{2}\hspace{0.1cm}\text{over}\hspace{0.1cm}\Phi^{--}\\
        \Delta L\leq0, \Delta H^{X}\leq0, \Delta H^{Y}\leq0\hspace{0.1cm}\forall (s,t)\in\bar{U} \Rightarrow  F_{1}SD_{2}F_{2}\hspace{0.1cm}\text{over}\hspace{0.1cm}\Phi^{++}
\end{eqnarray}

Note that marginal conditions are the same in both cases. 

Our goal in the next sections is to transform this sufficient conditions for dominance into statistical tests, using appropriate empirical versions of the functionals and Kolmogorov-Smirnov-type test statistics. 

\section{Testing for First Order Bivariate Stochastic Dominance}\label{SD1}

\subsection{General Setting}

To test for first order stochastic dominance we first extend the definition of empirical functionals from \cite{davidson} to the bivariate case. Given bivariate random samples $\{(X_{1},Y_{1}),...,(X_{1},Y_{1})\}$ we construct the empirical measure $P_{n}(B)=\frac{1}{n}\sum_{i=1}^{n}\delta_{(X_{i},Y_{i})}(B)$ for each Borel set $B\subseteq\mathbb{R}^{2}$. As vectors $(X_{i},Y_{i})$ are random, this is a random measure\footnote{On a technical note, we are going to assume that the sample space is a separable metric space and that continuum hypothesis holds, to ensure that the range of Brownian bridge is separable (see \cite{dudleyRAP}, Sec.13.1).}. 

The empirical distribution function is obtained as:

\begin{equation}\label{empcdf}
 \hat{F}_{n}(s,t)=P_{n}((-\infty,s]\times(-\infty,t])=\frac{1}{n}\sum_{i=1}^{n}1_{\{X_{i}\leq s\}}(s)1_{\{Y_{i}\leq t\}}(t)
\end{equation}

Marginal empirical cdfs are thus:

\begin{eqnarray}
 \hat{F}^{X}_{n}(s)=P_{n}((-\infty,s]\times\mathbb{R})=\frac{1}{n}\sum_{i=1}^{n}1_{\{X_{i}\leq s\}}(s)\\
 \hat{F}^{Y}_{n}(t)=P_{n}(\mathbb{R}\times(-\infty,t])=\frac{1}{n}\sum_{i=1}^{n}1_{\{Y_{i}\leq t\}}(t)
\end{eqnarray}

We also have an empirical version of $K(s,t)$ given by:

\begin{equation}\label{empiK}
\hat{K}(s,t)=-\left(\hat{F}(s,t)-\hat{F}^{X}(s)-\hat{F}^{Y}(t)\right)
\end{equation}

Now assume we have samples $\{(X_{1},Y_{1})_{1},(X_{1},Y_{1})_{2},...,(X_{1},Y_{1})_{m}\}$ from distribution $F_{1}$ (measure $P$) and $\{(X_{2},Y_{2})_{1},(X_{2},Y_{2})_{2},...,(X_{2},Y_{2})_{n}\}$ from distribution $F_{2}$ (measure $Q$), with associated empirical measures $P_{m}$ and $Q_{n}$ respectively. The general two sample statistic is $\nu_{m,n}=\left(\frac{mn}{m+n}\right)^{1/2}(P_{m}-Q_{n})$.

Following the same logic as in \cite{mcfadden} we are going to apply $\nu_{m,n}$ to sets of the form $[0,s]\times[0,t]$, and then take supremum to obtain a uniform condition of Kolmogorov-Smirnov type. 

We then have the test statistic for first order stochastic dominance in the submodular bivariate case:

\begin{equation}\label{lambda}
 \lambda_{m,n}=\left(\frac{mn}{m+n}\right)^{1/2}\sup_{(s,t)\in[0,1]\times[0,1]}(\hat{F_{1}}_{m}-\hat{F_{2}}_{n})(s,t)
\end{equation}

where $\hat{F_{1}}_{m}$ and $\hat{F_{2}}_{n}$ are the cdfs obtained from empirical measures $P_{m}$ and $Q_{n}$. 

For the supermodular class we introduce the analogue test statistic\footnote{Note that we are not centering our attention on the marginal conditions for dominance as they can be tested with the already known univariate test by \cite{barret1}.}:

\begin{equation}\label{kappa}
 \kappa_{m,n}=\left(\frac{mn}{m+n}\right)^{1/2}\sup_{[0,1]\times[0,1]}(\hat{K_{1}}_{m}-\hat{K_{2}}_{n})(s,t)
\end{equation}

To obtain limit distributions for test statistics \ref{lambda} and \ref{kappa} we are going to use bootstrap techniques as in \cite{vanderV}, Sec.3.7. To simplify notation put $V_{i}=(X_{1},Y_{1})_{i}$, the i-th element in the sample from $F_{1}$, and $W_{i}=(X_{2},Y_{2})_{j}$, the j-th element in the sample from $F_{2}$.

We start we the \textit{pooled sample} of size $N=n+m$:

\begin{equation}\label{pooledS}
 \{Z_{N1},Z_{N2},...,Z_{NN}\}=\{V_{1},...,V_{m},W_{1},...,W_{n}\}
\end{equation}

From the pooled sample we can define the \textit{pooled empirical distribution}:

\begin{equation}
 H_{N}=\frac{1}{N}\sum_{i=1}^{N}\delta_{Z_{Ni}}=\alpha_{N}P_{m}+(1-\alpha_{N})Q_{n}
\end{equation}

where $\alpha_{N}=m/n$. 

Now if we re-sample from \ref{pooledS} with replacement we obtain bootstrap samples of size $N$ and for each of this samples we have bootstrapped empirical measures: 

\begin{equation}\label{bootempi}
 \hat{P}_{m,N}^{B}=\frac{1}{m}\sum_{i=1}^{m}\delta_{\hat{Z}_{Ni}} \hspace{0.5cm} \hat{Q}_{n,N}^{B}=\frac{1}{n}\sum_{i=1}^{n}\delta_{\hat{Z}_{N,m+i}}
\end{equation}

We are going to use a couple of results from \cite{vanderV} but we first need to introduce some notation. Given a signed measure $Q$ and a class of measurable functions $\mathcal{F}$ we think of $Q$ acting on functions by integration, i.e. $Qf=\int fdQ$. We denote $\parallel Q\parallel_{\mathcal{F}}=\sup\{|Qf|:f\in\mathcal{F}\}$. We denote weak convergence of a sequence of probability measures by $\leadsto$.  

For the definition of Donsker classes and the Brownian bridge we refer to \cite{vanderV} Sec.2.1, and for outer integral $P^{*}f$ to Sec.1.2 in the same reference. 

We'll use the following results from Sec.3.7. of \cite{vanderV}:

\begin{thm}\label{boot1}
Let $\mathcal{F}$ be a class of measurable functions which is both $P$-Donsker and $Q$-Donsker, and such that  $||P||_{\mathcal{F}}<\infty$, $||Q||_{\mathcal{F}}<\infty$.  If $m,n\rightarrow\infty$ such that $m/N\rightarrow\alpha\in(0,1)$, then $\sqrt{m}(\hat{P}_{m,N}^{B}-H_{N})\leadsto G_{H}$ in probability given $V_{1}.V_{2},...W_{1},W_{2},...$. 

Here $G_{H}$ is a tight Brownian bridge corresponding to measure $H=\alpha P+(1-\alpha) Q$.
\end{thm}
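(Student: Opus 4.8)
The plan is to read $\sqrt m\,(\hat P_{m,N}^{B}-H_N)$ as a multinomially weighted bootstrap empirical process indexed by $\mathcal F$ and then to apply the conditional multiplier/bootstrap central limit theorem of \cite{vanderV}, Sec.~3.6, in the form valid for a triangular array of row-independent observations. Let $(M_{N1},\dots,M_{NN})$ be the $\mathrm{Multinomial}(m;\,1/N,\dots,1/N)$ vector counting how often each pooled point $Z_{Ni}$ is drawn in the resample; since $\sum_i M_{Ni}=m$ one has, for every $f\in\mathcal F$,
\begin{equation}
\sqrt m\,(\hat P_{m,N}^{B}-H_N)f=\frac{1}{\sqrt m}\sum_{i=1}^{N}\Bigl(M_{Ni}-\tfrac{m}{N}\Bigr)f(Z_{Ni}),
\end{equation}
a sum over the independent array $\{Z_{Ni}\}_{i\le N}$ (the first $m$ distributed as $P$, the last $n$ as $Q$) against mean-zero exchangeable weights; the identity $\sum_i (M_{Ni}-m/N)=0$ is what produces a bridge rather than an unconstrained Gaussian limit. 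A Poissonisation step (\cite{vanderV}, Sec.~3.6.2) lets one replace the multinomial weights by i.i.d.\ centred weights, so it suffices to prove a conditional multiplier CLT for this array.

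Two ingredients are then needed. First, $\mathcal F$ must be $H$-Donsker for $H=\alpha P+(1-\alpha)Q$: this follows from $\mathcal F$ being both $P$- and $Q$-Donsker, because bracketing (or uniform-entropy) integrals with respect to a finite mixture are dominated by those with respect to the mixands, $L^{2}(H)$-norms being convex combinations of $L^{2}(P)$- and $L^{2}(Q)$-norms; moreover the hypotheses $\|P\|_{\mathcal F}<\infty$, $\|Q\|_{\mathcal F}<\infty$ together with the Donsker property force $\sup_{f\in\mathcal F}(Pf^{2}+Qf^{2})<\infty$, hence square-integrability of the envelope under $H$ and under every $H_N$. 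Second, the conditional finite-dimensional laws must converge to those of $G_H$; here, for fixed $f,g\in\mathcal F$, the conditional covariance of the bootstrap process is exactly
\begin{equation}
\mathrm{Cov}_{Z}\bigl(\sqrt m\,(\hat P_{m,N}^{B}-H_N)f,\ \sqrt m\,(\hat P_{m,N}^{B}-H_N)g\bigr)=H_N(fg)-(H_Nf)(H_Ng),
\end{equation}
which tends a.s.\ to $H(fg)-(Hf)(Hg)$ by the strong law applied to $P_m$ and $Q_n$ (recall $H_N=\alpha_NP_m+(1-\alpha_N)Q_n$ with $\alpha_N\to\alpha$); together with the automatic Lindeberg condition for centred multinomial weights this yields convergence of all conditional finite-dimensional laws to the Gaussian vector with covariance function $H(fg)-(Hf)(Hg)$, i.e.\ to $G_H$.

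The step I expect to be the real obstacle is the conditional asymptotic equicontinuity of the bootstrap process along the triangular array, namely that for every $\varepsilon>0$
\begin{equation}
\lim_{\delta\downarrow0}\ \limsup_{N\to\infty}\ \mathbb P^{*}\!\left(\mathbb P_{Z}\Bigl(\sup_{\rho_{H}(f,g)<\delta}\bigl|\sqrt m\,(\hat P_{m,N}^{B}-H_N)(f-g)\bigr|>\varepsilon\Bigr)>\varepsilon\right)=0,
\end{equation}
where $\mathbb P_Z$ is the bootstrap (conditional) probability and $\rho_H$ the $L^{2}(H)$ semimetric. One attacks this by symmetrisation followed by the chaining/maximal inequality of \cite{vanderV}, Sec.~2.9 and~3.6; the crucial point is that the entropy integral controlling the chaining bound must be finite \emph{uniformly in $N$}, and this is where one genuinely uses that $\mathcal F$ is Donsker for $P$ \emph{and} for $Q$ separately: the measures $H_N=\alpha_NP_m+(1-\alpha_N)Q_n$ are random mixtures of the two empiricals, and their covering numbers are bounded by a fixed multiple of the sum of the covering numbers of $\mathcal F$ under $P$ and under $Q$, which are integrable by hypothesis. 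A truncation of the weights at a level tending slowly to infinity, combined with the uniform square-integrability of the envelope noted above, disposes of the large-weight contribution. Finite-dimensional convergence together with this equicontinuity yields, by \cite{vanderV}, Sec.~3.6, the conditional weak convergence $\sqrt m\,(\hat P_{m,N}^{B}-H_N)\leadsto G_H$ in outer probability given $V_1,V_2,\dots,W_1,W_2,\dots$, which is the assertion.
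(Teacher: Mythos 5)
The paper does not prove this statement at all: it is quoted verbatim from van der Vaart and Wellner and the ``proof'' is the one-line citation ``It is Theorem 3.7.6 from \cite{vanderV}.'' Your proposal instead tries to reconstruct the proof of that textbook theorem, and the architecture you choose (multinomial resampling weights, Poissonisation, conditional multiplier CLT, finite-dimensional convergence plus conditional asymptotic equicontinuity) is indeed the route taken in Sections 3.6--3.7 of \cite{vanderV}, so at the level of strategy you are reproducing the source rather than diverging from it.

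Two steps in your sketch are, however, genuinely wrong as written. First, you claim that $\|P\|_{\mathcal F}<\infty$, $\|Q\|_{\mathcal F}<\infty$ and the Donsker property ``force \dots square-integrability of the envelope.'' They force only $\sup_{f}Pf^{2}<\infty$ (boundedness of the variance of the tight limit plus bounded means); this does not imply $P^{*}F^{2}<\infty$ for the envelope $F=\sup_{f}|f|$ --- take $\mathcal F=\{n1_{A_n}\}$ with disjoint $A_n$, $P(A_n)=n^{-2}$. The envelope condition is precisely the extra hypothesis that separates the almost-sure version (Theorem 3.7.7, quoted as Theorem \ref{boot2} above) from the in-probability version you are proving, so a correct proof of Theorem \ref{boot1} must not use it; your truncation step for the large weights therefore needs a different argument (this is where the ``in probability'' rather than ``outer almost surely'' conclusion comes from). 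Second, your equicontinuity step bounds covering numbers of $\mathcal F$ under $H_N$ by ``integrable'' covering numbers under $P$ and $Q$; but the hypothesis is only that $\mathcal F$ is Donsker, which does not supply a uniform-entropy or bracketing integral bound. The standard way around both problems is to not reprove equicontinuity from entropy at all: one first shows that $P$- and $Q$-Donsker implies $H$-Donsker for the mixture $H=\alpha P+(1-\alpha)Q$ by a direct representation of the $H$-sample as a binomial mixture of $P$- and $Q$-samples (Lemma 3.7.5 in \cite{vanderV}), and then invokes the exchangeable-bootstrap theorem (Theorem 3.6.13 there), whose in-probability half needs only the Donsker property of $\mathcal F$ for $H$. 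If you intend a self-contained proof rather than a citation, these are the two lemmas you must actually supply.
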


\begin{proof}
It is Theorem 3.7.6 from \cite{vanderV}.\footnote{Theorem 3.7.6 and Theorem 3.7.7 are for general sample spaces, we are using them for random vectors in 2 dimensions.}
\end{proof}

\begin{thm}\label{boot2}
If in addition $\mathcal{F}$ possesses an envelope function $F$ with $P^{*}F^{2}<\infty$ and $Q^{*}F^{2}<\infty$, then $\sqrt{m}(\hat{P}_{m,N}-H_{N})\leadsto G_{H}$ given almost any sequence $V_{1}.V_{2},...W_{1},W_{2},...$. 
\end{thm}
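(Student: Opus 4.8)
The statement is Theorem~3.7.7 of \cite{vanderV}, and the plan is to indicate how it follows by upgrading the in-probability conclusion of Theorem~\ref{boot1} to an almost-sure statement along the data sequence, the square-integrable envelope being precisely the ingredient that makes this possible. I would first recall that the conditional weak convergence $\sqrt{m}(\hat{P}_{m,N}^{B}-H_{N})\leadsto G_{H}$ given the data is equivalent, via the bounded-Lipschitz characterization, to two requirements: (i) $\sup_{h\in \mathrm{BL}_{1}}|E_{M}h(\sqrt{m}(\hat{P}_{m,N}^{B}-H_{N}))-Eh(G_{H})|\to 0$, where $E_{M}$ denotes expectation over the resampling only, conditionally on the pooled sample $Z_{N1},\dots,Z_{NN}$; and (ii) an asymptotic equicontinuity condition for the conditional law of the process indexed by $\mathcal{F}$. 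Theorem~\ref{boot1} delivers both with convergence in outer probability, so the task is to replace ``in probability'' by ``for almost every sequence $V_{1},V_{2},\dots,W_{1},W_{2},\dots$''.

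The finite-dimensional part I would treat by a conditional Lindeberg central limit theorem for the triangular array $\{f(\hat{Z}_{Ni})-H_{N}f\}_{i\le m}$, which conditionally on the pooled sample consists of i.i.d. summands with variance $H_{N}f^{2}-(H_{N}f)^{2}$. By the classical strong law applied to the i.i.d. blocks making up the pooled sample, together with $m/N\to\alpha$, this conditional variance (and, for pairs $f,g$, the conditional covariance $H_{N}(fg)-(H_{N}f)(H_{N}g)$) converges almost surely to the covariance of $G_{H}$; here one needs $H_{N}$ to converge a.s. on both $f$ and $f^{2}$, and this is exactly the first use of $P^{*}F^{2}<\infty$, $Q^{*}F^{2}<\infty$. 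The same square-integrability verifies the conditional Lindeberg condition through a truncation/maximal-summand estimate, using that $\max_{j\le N}F(Z_{Nj})=o(\sqrt{N})$ a.s. when $F$ is square-integrable. Hence the finite-dimensional conditional distributions converge to those of $G_{H}$ for almost every sequence.

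The substantive step, and the one I expect to be the main obstacle, is upgrading the asymptotic equicontinuity. I would symmetrize the bootstrap process by the multiplier inequalities of \cite{vanderV}, Sec.~2.9, reducing control of $\|\sqrt{m}(\hat{P}_{m,N}^{B}-H_{N})\|_{\mathcal{F}_{\delta}}$, with $\mathcal{F}_{\delta}=\{f-g:f,g\in\mathcal{F},\ \rho_{H}(f-g)<\delta\}$, to a Rademacher-weighted empirical process made small by the Donsker property of $\mathcal{F}$; the square-integrable envelope is what guarantees the relevant maximal inequalities are finite and that the bound is small in the appropriate uniform-integrability sense rather than merely in probability. Finally, I would invoke the observation that permutation-invariant functionals of an i.i.d. sequence, such as the conditional $\mathrm{BL}_{1}$-distance above and the equicontinuity moduli for samples of size $N$, form reverse submartingales with respect to the decreasing sequence of Hewitt--Savage $\sigma$-fields, so that their convergence to $0$ in probability (Theorem~\ref{boot1}) automatically promotes to convergence almost surely. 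Combining a.s. finite-dimensional convergence with a.s. asymptotic equicontinuity then yields conditional weak convergence to $G_{H}$ for almost every sequence, which is the claim; the delicate bookkeeping is in the reverse-submartingale and Hoffmann--J\o{}rgensen estimates of this last step, the remainder being an adaptation of the proof of Theorem~\ref{boot1}.
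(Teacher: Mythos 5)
Your proposal is correct and takes essentially the same route as the paper, which simply quotes this result from van der Vaart and Wellner; you rightly identify it as Theorem~3.7.7 there (the paper's own proof repeats the citation to Theorem~3.7.6, evidently a typo), and your sketch of the internal argument --- square-integrable envelope feeding the almost-sure multiplier/reverse-submartingale upgrade of the in-probability statement --- is a faithful outline of that reference's proof. Nothing further is needed.
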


\begin{proof}
It is Theorem 3.7.6 from \cite{vanderV}.
\end{proof}

\subsection{Nonparametric Tests for First Order Bivariate Stochastic Dominance}

In what follows we are going to consider the class of indicator functions: 

\begin{displaymath}
\mathcal{F}=\left\{1_{[0,s]\times[0,t]}:s,t\in[0,1]\right\} 
\end{displaymath}

and the space $\mathit{l}^{\infty}(\mathcal{F})$ of all real bounded functions over $\mathcal{F}$. A typical element from $\mathit{l}^{\infty}(\mathcal{F})$ is for example a finite signed measure, acting over the elements of $\mathcal{F}$ by integration. 

It will be useful to assign to each function $f\in\mathcal{F}$ a couple of related functions also in $\mathcal{F}$, given by:

\begin{displaymath}
 f=1_{[0,s]\times[0,t]}\longrightarrow \begin{cases}
                                        f^{x}=1_{[0,s]\times[0,1]} \\
                                        f^{y}=1_{[0,1]\times[0,t]}
                                       \end{cases}
\end{displaymath}

Let's consider now the following operators, $T_{1},T_{2}:\mathit{l}^{\infty}(\mathcal{F})\rightarrow\mathbb{R}$, given by:

\begin{equation}
T_{1}(P)=\sup_{f\in\mathcal{F}}Pf
\end{equation}

\begin{equation}
T_{2}(P)=\sup_{f\in\mathcal{F}}{(Pf^{x}+Pf^{y}-Pf)}
\end{equation}

Note that if $P$ y $Q$ are probability measures on $[0,1]\times[0,1]$, then they have associated cdfs $F_{1}(s,t)$ and $F_{2}(s,t)$ (and functions $K_{1}(s,t),K_{2}(s,t)$ given as in Sec.\ref{classes}) and then: 

\begin{equation}
 T_{1}(P-Q)=\sup_{(s,t)\in[0,1]\times[0,1]}(F_{1}-F_{2})(s,t)
\end{equation}

\begin{equation}
 T_{2}(P-Q)=\sup_{(s,t)\in[0,1]\times[0,1]}(K_{1}-K_{2})(s,t)
\end{equation}

\begin{thm}[First Order Bivariate Stochastic Dominance Test]\label{mioSD1}:\\ Let's consider test statistics $\lambda_{m,n}^{B}$ and $\kappa_{m,n}^{B}$ given by using two sample bootstrap empirical distributions (given by \ref{bootempi}) in formulas \ref{lambda} y \ref{kappa}. Then if $m,n\rightarrow\infty$ such that $m/N\rightarrow\alpha\in(0,1)$:

\begin{enumerate}
 \item $\lambda_{m,n}^{B}\leadsto T_{1}(G_{H})$ 
 \item $\kappa_{m,n}^{B}\leadsto T_{2}(G_{H})$
\end{enumerate}
given almost any sequence $V_{1},V_{2},...W_{1},W_{2},...$, where $H=\alpha P+(1-\alpha)Q$ y $G_{H}$ is a version of the tight Brownian bridge corresponding to measure $H$. 
\end{thm}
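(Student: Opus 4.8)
The plan is to write each bootstrap statistic as a fixed, continuous functional of the two–sample bootstrap empirical process and then combine the bootstrap limit theorems \ref{boot1}--\ref{boot2} with the continuous mapping theorem in $\mathit{l}^{\infty}(\mathcal{F})$. Using the identities displayed just before the statement, together with the positive homogeneity of $T_{1},T_{2}$, we have $\lambda_{m,n}^{B}=T_{1}\bigl(\sqrt{mn/N}\,(\hat{P}_{m,N}^{B}-\hat{Q}_{n,N}^{B})\bigr)$ and $\kappa_{m,n}^{B}=T_{2}\bigl(\sqrt{mn/N}\,(\hat{P}_{m,N}^{B}-\hat{Q}_{n,N}^{B})\bigr)$, so it suffices to identify the weak limit of $\sqrt{mn/N}\,(\hat{P}_{m,N}^{B}-\hat{Q}_{n,N}^{B})$ in $\mathit{l}^{\infty}(\mathcal{F})$ and to check continuity of $T_{1},T_{2}$ where it matters.

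First I would verify that $\mathcal{F}=\{1_{[0,s]\times[0,t]}:s,t\in[0,1]\}$ meets the hypotheses of Theorems \ref{boot1} and \ref{boot2}: the family of sets $\{[0,s]\times[0,t]\}$ is a VC class (south–west quadrants in the plane have finite VC index), hence $\mathcal{F}$ is universally Donsker and in particular both $P$– and $Q$–Donsker; its constant envelope $F\equiv1$ satisfies $P^{*}F^{2}=Q^{*}F^{2}=1<\infty$; and since the elements of $\mathcal{F}$ are indicators, $\|P\|_{\mathcal{F}}\le1$ and $\|Q\|_{\mathcal{F}}\le1$. Theorem \ref{boot2} then gives $\sqrt{m}\,(\hat{P}_{m,N}^{B}-H_{N})\leadsto G_{H}$ given almost any sequence, and the same argument applied to the second block of the pooled sample (here $n/N\to1-\alpha\in(0,1)$) gives $\sqrt{n}\,(\hat{Q}_{n,N}^{B}-H_{N})\leadsto G'_{H}$, with $G_{H},G'_{H}$ tight Brownian bridges for $H=\alpha P+(1-\alpha)Q$.

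Next I would pass to the difference. Conditionally on the pooled sample the bootstrap draws $\hat Z_{N1},\dots,\hat Z_{NN}$ are i.i.d.\ from $H_{N}$, so the blocks $(\hat Z_{N1},\dots,\hat Z_{Nm})$ and $(\hat Z_{N,m+1},\dots,\hat Z_{NN})$ — hence $\hat{P}_{m,N}^{B}$ and $\hat{Q}_{n,N}^{B}$ — are conditionally independent, which upgrades the two marginal limits to the joint statement $\bigl(\sqrt m(\hat{P}_{m,N}^{B}-H_{N}),\,\sqrt n(\hat{Q}_{n,N}^{B}-H_{N})\bigr)\leadsto(G_{H},G'_{H})$ with $G_{H}$ and $G'_{H}$ independent, given almost any sequence. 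Writing $\sqrt{mn/N}=\sqrt{n/N}\,\sqrt m=\sqrt{m/N}\,\sqrt n$ and using $m/N\to\alpha$, $n/N\to1-\alpha$, the continuous mapping theorem applied to $(a,b)\mapsto\sqrt{1-\alpha}\,a-\sqrt{\alpha}\,b$ yields
\begin{equation*}
\sqrt{\tfrac{mn}{N}}\bigl(\hat{P}_{m,N}^{B}-\hat{Q}_{n,N}^{B}\bigr)
=\sqrt{\tfrac{n}{N}}\,\sqrt m\,(\hat{P}_{m,N}^{B}-H_{N})-\sqrt{\tfrac{m}{N}}\,\sqrt n\,(\hat{Q}_{n,N}^{B}-H_{N})
\leadsto \sqrt{1-\alpha}\,G_{H}-\sqrt{\alpha}\,G'_{H}=:\widetilde G_{H}
\end{equation*}
in $\mathit{l}^{\infty}(\mathcal{F})$, given almost any sequence. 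Since $G_{H}$ and $G'_{H}$ are independent, $\widetilde G_{H}$ is centered Gaussian with covariance $\big((\sqrt{1-\alpha})^{2}+(\sqrt{\alpha})^{2}\big)=1$ times that of the $H$–Brownian bridge, i.e.\ $\widetilde G_{H}$ is itself a version of the tight Brownian bridge for $H$. It then remains to apply the continuous mapping theorem once more: $T_{1},T_{2}$ are $\|\cdot\|_{\mathcal{F}}$–Lipschitz ($|T_{1}(\mu)-T_{1}(\nu)|\le\|\mu-\nu\|_{\mathcal{F}}$ and, since $f^{x},f^{y}\in\mathcal{F}$, $|T_{2}(\mu)-T_{2}(\nu)|\le3\|\mu-\nu\|_{\mathcal{F}}$), hence continuous everywhere, and $\widetilde G_{H}$ is tight with separable range (this is where the set–theoretic footnote is used), so $\lambda_{m,n}^{B}\leadsto T_{1}(\widetilde G_{H})\overset{d}{=}T_{1}(G_{H})$ and $\kappa_{m,n}^{B}\leadsto T_{2}(\widetilde G_{H})\overset{d}{=}T_{2}(G_{H})$, given almost any sequence $V_{1},V_{2},\dots,W_{1},W_{2},\dots$.

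The step I expect to be the main obstacle is passing from the two marginal bootstrap limits to the joint limit with independent Brownian bridges and then checking that this specific linear combination $\sqrt{1-\alpha}\,G_{H}-\sqrt{\alpha}\,G'_{H}$ reproduces exactly the $H$–Brownian–bridge covariance, so that the limiting laws $T_{i}(\widetilde G_{H})$ and $T_{i}(G_{H})$ coincide; the Donsker/VC verification for $\mathcal{F}$ and the Lipschitz and homogeneity properties of $T_{1},T_{2}$ are routine.
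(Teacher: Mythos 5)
Your proof is correct and follows essentially the same strategy as the paper: verify that $\mathcal{F}$ is Donsker with a square--integrable envelope so that the two--sample bootstrap limit theorems apply, then push the limit through $T_{1}$ and $T_{2}$ via the continuous mapping theorem using the same Lipschitz bounds ($1$ for $T_{1}$, $3$ for $T_{2}$) and the separability of the range of the Brownian bridge. The one place where you go beyond the paper is the middle step: the paper asserts $\sqrt{mn/(m+n)}\,(\hat{P}_{m,N}^{B}-\hat{Q}_{n,N}^{B})\leadsto G_{H}$ directly as a ``consequence'' of Theorems \ref{boot1}--\ref{boot2}, which in the form stated only give the one--block limits $\sqrt{m}\,(\hat{P}_{m,N}^{B}-H_{N})\leadsto G_{H}$; your argument via conditional independence of the two bootstrap blocks, the decomposition $\sqrt{n/N}\,\sqrt{m}\,(\hat{P}_{m,N}^{B}-H_{N})-\sqrt{m/N}\,\sqrt{n}\,(\hat{Q}_{n,N}^{B}-H_{N})$, and the covariance check $(1-\alpha)+\alpha=1$ supplies exactly the justification the paper omits. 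So your write--up is, if anything, more complete at the step that actually carries the weight of the theorem; the remaining ingredients (VC/Donsker property of quadrants, the constant envelope, homogeneity and Lipschitz continuity of $T_{1},T_{2}$) match the paper's proof.
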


\begin{proof} Our class of functions $\mathcal{F}$ is a Donsker class because it is a subclass of rectangles in $\mathbb{R}^{2}$ which os known to be Donsker (\cite{dudleyRAP}, \cite{vanderV}). On the other hand, constant function 1 is an envelope satisfying conditions in Theorem \ref{boot2}. Consequently we have:

\begin{displaymath}
 \sqrt{\frac{mn}{m+n}}(\hat{P}_{m,N}-\hat{Q}_{n,N})\leadsto G_{H}
\end{displaymath}

Now we have $\lambda_{m,n}^{B}=T_{1}(\sqrt{\frac{mn}{m+n}}(\hat{P}_{m,N}-\hat{Q}_{n,N}))$ and $\kappa_{m,n}^{B}=T_{2}(\sqrt{\frac{mn}{m+n}}(\hat{P}_{m,N}-\hat{Q}_{n,N}))$. Note that empirical measures (and its bootstrap versions) are random elements taking values in the metric space $(\mathit{l}^{\infty}(\mathcal{F}),\parallel \parallel_{\mathcal{F}})$.

Under our assumptions we know that the range of the Brownian bridge is separable and then, as weak convergence is preserved by continuous mappings (\cite{vanderV} Theorem 1.11.1) our result will follow if we show that operators $T_{1}$ y $T_{2}$ are continuous.

Now if $P$ and $Q$ are elements from $\mathit{l}^{\infty}(\mathcal{F})$:

\begin{displaymath}
 |T_{1}(P)-T_{1}(Q)|=|\sup_{f\in\mathcal{F}}{Pf}-\sup_{f\in\mathcal{F}}{Qf}|\leq\sup_{f\in\mathcal{F}}|{(P-Q)f}|=\parallel P-Q\parallel_{\mathcal{F}}
\end{displaymath}

this shows that $T_{1}$ is continuous and proves the first part.

Now let $P$ and $Q$ be elements from $\mathit{l}^{\infty}(\mathcal{F})$, then:

\begin{multline*}
|T_{2}(P)-T_{2}(Q)|=|\sup_{f\in\mathcal{F}}{(Pf^{x}+Pf^{y}-Pf)}-\sup_{f\in\mathcal{F}}{(Qf^{x}+Qf^{y}-Qf)}|\\\leq\sup_{f\in\mathcal{F}}{\left\{|(Pf^{x}+Pf^{y}-Pf)-(Qf^{x}+Qf^{y}-Qf)|\right\}}\leq\\\leq\sup_{f\in\mathcal{F}}{\left\{|(P-Q)f^{x}|+(P-Q)f^{y}|+|(Q-P)f)|\right\}}
\end{multline*}

It is clear that $\sup_{f\in\mathcal{F}}{\left\{|(P-Q)f^{x}|\right\}}\leq\parallel P-Q\parallel_{\mathcal{F}}$, as we are covering a smaller class of functions. The same applies to the term with $\left\{|(P-Q)f^{y}|\right\}$ and then $|T_{2}(P)-T_{2}(Q)|\leq3\parallel P-Q\parallel_{\mathcal{F}}$, which shows $T_{2}$ is continuous and this finishes our proof.
\end{proof}

As a consequence of Theorem \ref{mioSD1}, we have a sequence of critical values given by the quantiles of bootstrap distribution, that is:

\begin{equation}
\hat{c}_{nm}^{-}=\inf{\left\{t:P_{\hat{Z}}(\lambda_{mn}^{B}>t)\leq\beta\right\}}
\end{equation}

for the submodular case and:

\begin{equation}
\hat{c}_{nm}^{+}=\inf{\left\{t:P_{\hat{Z}}(\kappa_{mn}^{B}>t)\leq\beta\right\}}
\end{equation}

for the supermodular case.

The consistency of the test then follows in the sense that:

\begin{eqnarray}
 \hat{c}_{nm}^{-}\longrightarrow c_{H}^{-}=\inf{\left\{t:P(T_{1}(G_{H})>t)\leq\beta\right\}}\\
 \hat{c}_{nm}^{+}\longrightarrow c_{H}^{+}=\inf{\left\{t:P(T_{2}(G_{H})>t)\leq\beta\right\}}
\end{eqnarray}

In the supermodular case for example, we have a one-tail test, rejecting hypothesis $PSD_{1}Q$ (equivalent to $K_{1}\leq K_{2}$ uniformly) if $\kappa_{m,n}>\hat{c}_{nm}^{+}$. Given the pooled sample $V_{1},...,V_{m},W_{1},...,W_{n}$, there are $N^{N}$ bootstrap samples, each with the same probability then $N^{N}$ possible values for $\kappa_{mn}^{B}$ (counting with multiplicity). 

Note that rejecting $PSD_{1}Q$ does not imply accepting $QSD_{1}P$, simply because $sup_{s,t}(F_{1}(s,t)-F_{2}(s,t))\nleq0$ does not imply $sup_{s,t}(F_{2}(s,t)-F_{1}(s,t))\leq0$. That is why we don't have a two tailed test, and instead we have to test hypothesis $PSD_{1}Q$ and $QSD_{1}P$ separately. This is similar for the tests in \cite{barret1}. 

Our test rejects the hypothesis if the value of $\kappa_{mn}$ is on the unpper $\beta$ fraction of the values $\kappa_{mn}^{B}$, resulting then in a probability of type I error less than $\beta$. The sequence of tests then results asymptotically of level $\beta$.

\section{Testing for Second Order Bivariate Stochastic Dominance}\label{SD2}

To test second order dominance we first need an empirical version of integral operator $H$:

\begin{equation}
 \hat{H}(x,y)=H(x,y,\hat{F}_{N})=\int_{0}^{x}\int_{0}^{y}\hat{F}_{N}(s,t)dsdt
\end{equation}

Noting that:

\begin{multline}
L(x,y)=\int_{0}^{x}\int_{0}^{y}K(s,t)dt=\int_{0}^{x}\int_{0}^{y}(F^{X}(s)+F^{Y}(t)-F(s,t))dsdt=\\=-H(x,y)+yF^{X}(x)+xF^{Y}(y) 
\end{multline}

we can simply use $\hat{H}$, $\hat{F}^{X}$ and $\hat{F}^{Y}$ to obtain $\hat{L}$. 

Using the same two samples as in Section \ref{SD1} we define for the submodular case:

\begin{equation}\label{mu}
 \mu_{m,n}=\left(\frac{mn}{m+n}\right)^{1/2}\sup_{[0,1]\times[0,1]}(\hat{H_{1}}_{m}-\hat{H_{2}}_{n})(s,t)
\end{equation}

and similarly for the supermodular case:

\begin{equation}\label{gamma}
 \gamma_{m,n}=\left(\frac{mn}{m+n}\right)^{1/2}\sup_{[0,1]\times[0,1]}(\hat{L_{1}}_{m}-\hat{L_{2}}_{n})(s,t)
\end{equation}

Note that this test statistics are multidimensional generalizations of those in \cite{barret1}.

Working with the same definitions and references as in Sec.\ref{SD1} we'll define two operators, $T_{3},T_{4}:\mathit{l}^{\infty}(\mathcal{F})\rightarrow\mathbb{R}$.

To define $T_{3}$, we start by assigning to each element $P\in\mathit{l}^{\infty}(\mathcal{F})$ a function $F_{P}:[0,1]\times[0,1]\rightarrow\mathbb{R}$ given by:

\begin{equation}\label{fp}
F_{P}(s,t)=P1_{[0,s]\times[0,t]} 
\end{equation}

As $P\in\mathit{l}^{\infty}(\mathcal{F})$ the function in \ref{fp} is bounded, and so it turns to be Lebesgue integrable\footnote{Should it be the case that function \ref{fp} was not measurable (perhaps $P$ is some strange element in $\mathit{l}^{\infty}(\mathcal{F})$), we would use the exterior integral. For the case of finite signed measures it has to be masurable, because it is pretty much the same as a distribution function.} over compact set $[0,1]\times[0,1]$. 

We can thus define:

\begin{equation}
T_{3}(P)=\sup_{(x,y)\in[0,1]\times[0,1]}\int_{0}^{x}\int_{0}^{y}F_{P}(s,t)dsdt 
\end{equation}

Note that if $P$ probability measure, then $F_{P}$ turns out to be its associated cdf and we have:

\begin{equation}
T_{3}(P-Q)=\sup_{(x,y)\in[0,1]\times[0,1]}\int_{0}^{x}\int_{0}^{y}(F_{P}-F_{Q})(s,t)dsdt 
\end{equation}

For the supermodular case let's start by defining for each $P\in\mathit{l}^{\infty}(\mathcal{F})$ the function $K_{P}:[0,1]\times[0,1]\rightarrow\mathbb{R}$ by:

\begin{equation}
K_{P}(s,t)=P1_{[0,s]\times[0,1]}+P1_{[0,1]\times[0,t]}-P1_{[0,s]\times[0,t]} 
\end{equation}

and as above we can define:

\begin{equation}
T_{4}(P)=\sup_{(x,y)\in[0,1]\times[0,1]}\int_{0}^{x}\int_{0}^{y}K_{P}(s,t)dsdt 
\end{equation}

so that for the case of probability measures:

\begin{equation}
T_{4}(P-Q)=\sup_{(x,y)\in[0,1]\times[0,1]}\int_{0}^{x}\int_{0}^{y}(K_{P}-K_{Q})(s,t)dsdt 
\end{equation}

We are going to use two sample bootstrap to find critical values for test statistics. The following theorem is analogous to the one we proved for first order bivariate dominance.

\begin{thm}[Second Order Bivariate Stochastic Dominance Test]\label{mioSD2}: Let's consider test statistics $\mu_{m,n}^{B}$ y  $\gamma_{m,n}^{B}$ obtained using the empirical two sample bootstrap distributions (given by \ref{bootempi}) in formulas \ref{mu} and \ref{gamma}. Then if $m,n\rightarrow\infty$ such that $m/N\rightarrow\alpha\in(0,1)$:

\begin{enumerate}
 \item $\mu_{m,n}^{B}\leadsto T_{3}(G_{H})$ 
 \item $\gamma_{m,n}^{B}\leadsto T_{4}(G_{H})$
\end{enumerate}
given almost any sequence $V_{1},V_{2},...Y_{1},Y_{2},...$, where $H=\alpha P+(1-\alpha)Q$ y $G_{H}$ is a version of the tight Brownian bridge corresponding to  measure $H$.
\end{thm}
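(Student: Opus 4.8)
The plan is to follow the same three-step scheme as in the proof of Theorem~\ref{mioSD1}: establish a bootstrap functional limit theorem for the pooled two-sample empirical process in $\mathit{l}^{\infty}(\mathcal{F})$, rewrite $\mu_{m,n}^{B}$ and $\gamma_{m,n}^{B}$ as the images of that process under $T_{3}$ and $T_{4}$, and conclude by the continuous mapping theorem. For the first step I would reuse the argument already given for Theorem~\ref{mioSD1} verbatim: $\mathcal{F}=\{1_{[0,s]\times[0,t]}:s,t\in[0,1]\}$ is a subclass of the rectangles in $\mathbb{R}^{2}$, hence Donsker (\cite{dudleyRAP}, \cite{vanderV}), and the constant function $1$ is an envelope with finite second outer moments under $P$ and $Q$, so Theorem~\ref{boot2} applies and yields
\begin{displaymath}
\nu_{m,N}^{B}:=\sqrt{\tfrac{mn}{m+n}}\bigl(\hat{P}_{m,N}-\hat{Q}_{n,N}\bigr)\leadsto G_{H}
\end{displaymath}
in $(\mathit{l}^{\infty}(\mathcal{F}),\parallel\cdot\parallel_{\mathcal{F}})$ given almost every sequence $V_{1},V_{2},\dots,W_{1},W_{2},\dots$.

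Second, I would identify the test statistics with the operators. By definition of the empirical functionals, $\hat{H_{1}}_{m}(x,y)=\int_{0}^{x}\int_{0}^{y}F_{\hat{P}_{m,N}}(s,t)\,ds\,dt$ and similarly for $\hat{H_{2}}_{n}$; since $P\mapsto F_{P}$ is linear, $(\hat{H_{1}}_{m}-\hat{H_{2}}_{n})(x,y)=\int_{0}^{x}\int_{0}^{y}F_{\hat{P}_{m,N}-\hat{Q}_{n,N}}(s,t)\,ds\,dt$, whence $\mu_{m,n}^{B}=T_{3}(\nu_{m,N}^{B})$. Using the identity $L(x,y)=-H(x,y)+yF^{X}(x)+xF^{Y}(y)$ and the fact that $\hat{K}$ is exactly $K_{\hat{P}_{m,N}}$, the same computation with $K_{P}$ replacing $F_{P}$ gives $\gamma_{m,n}^{B}=T_{4}(\nu_{m,N}^{B})$. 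Since the bootstrap measures are random elements of $(\mathit{l}^{\infty}(\mathcal{F}),\parallel\cdot\parallel_{\mathcal{F}})$ and, under the standing assumptions, the range of $G_{H}$ is separable, weak convergence is preserved under continuous maps (\cite{vanderV}, Theorem~1.11.1); thus the statement reduces to the continuity of $T_{3}$ and $T_{4}$.

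Third, I would establish that continuity by the Lipschitz estimate of the first-order proof, now carrying one extra integration. For $P,Q\in\mathit{l}^{\infty}(\mathcal{F})$ one has $|F_{P}(s,t)-F_{Q}(s,t)|=|(P-Q)1_{[0,s]\times[0,t]}|\leq\parallel P-Q\parallel_{\mathcal{F}}$ for all $(s,t)$, hence $\bigl|\int_{0}^{x}\int_{0}^{y}(F_{P}-F_{Q})(s,t)\,ds\,dt\bigr|\leq xy\parallel P-Q\parallel_{\mathcal{F}}\leq\parallel P-Q\parallel_{\mathcal{F}}$ because $x,y\in[0,1]$; taking the supremum over $(x,y)$ and using $|\sup a-\sup b|\leq\sup|a-b|$ gives $|T_{3}(P)-T_{3}(Q)|\leq\parallel P-Q\parallel_{\mathcal{F}}$. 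For $T_{4}$, writing $K_{P}(s,t)=Pf^{x}+Pf^{y}-Pf$ with $f=1_{[0,s]\times[0,t]}$ bounds $|K_{P}(s,t)-K_{Q}(s,t)|$ by $3\parallel P-Q\parallel_{\mathcal{F}}$, and the same integration-and-supremum argument gives $|T_{4}(P)-T_{4}(Q)|\leq3\parallel P-Q\parallel_{\mathcal{F}}$, which proves both operators are continuous and finishes the proof.

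The only step needing care beyond the first-order argument is that $T_{3}$ and $T_{4}$ presuppose that $F_{P}$ (resp.\ $K_{P}$) is Lebesgue integrable on $[0,1]\times[0,1]$: for finite signed measures this holds as noted in the text, and for a general element of $\mathit{l}^{\infty}(\mathcal{F})$ one replaces the integral by the outer integral, for which $|\overline{\int}g|\leq\overline{\int}|g|$ and $\overline{\int}(\mathrm{const})\leq(\mathrm{const})\cdot xy$ still hold, so the Lipschitz estimates survive unchanged. I expect this measurability bookkeeping to be the only mild obstacle; the rest is a direct transcription of the proof of Theorem~\ref{mioSD1}.
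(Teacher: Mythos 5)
Your proposal is correct and follows essentially the same route as the paper: reduce to the bootstrap limit already established for Theorem~\ref{mioSD1}, identify $\mu_{m,n}^{B}$ and $\gamma_{m,n}^{B}$ as $T_{3}$ and $T_{4}$ applied to the bootstrapped two-sample process, and conclude via the continuous mapping theorem after proving the Lipschitz bounds $\parallel P-Q\parallel_{\mathcal{F}}$ for $T_{3}$ and $3\parallel P-Q\parallel_{\mathcal{F}}$ for $T_{4}$. Your explicit treatment of the $T_{4}$ estimate and of the measurability/outer-integral point is in fact slightly more detailed than the paper, which only sketches these.
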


\begin{proof} We are in the same conditions as in Theorem \ref{mioSD1}, so it will suffice to show that $T_{3}$ and $T_{4}$ are continuous, since $\mu_{m,n}^{B}=T_{3}(\sqrt{\frac{mn}{m+n}}(\hat{P}_{m,N}-\hat{Q}_{n,N}))$ and $\gamma_{m,n}^{B}=T_{4}(\sqrt{\frac{mn}{m+n}}(\hat{P}_{m,N}-\hat{Q}_{n,N}))$.

Now if $P$ and $Q$ are elements in $\mathit{l}^{\infty}(\mathcal{F})$ we have:

\begin{multline}
 |T_{3}(P)-T_{3}(Q)|=\\=\left|\sup_{(x,y)\in[0,1]\times[0,1]}{\int_{0}^{x}\int_{0}^{y}F_{P}(s,t)dsdt}\hspace{0.2cm}-\sup_{(x,y)\in[0,1]\times[0,1]}{\int_{0}^{x}\int_{0}^{y}F_{Q}(s,t)dsdt}\right|\leq\\\leq\sup_{(x,y)\in[0,1]\times[0,1]}{\left|\int_{0}^{x}\int_{0}^{y}F_{P}(s,t)dsdt\hspace{0.2cm}-\int_{0}^{x}\int_{0}^{y}F_{Q}(s,t)dsdt\right|}\leq\\\leq\sup_{(x,y)\in[0,1]\times[0,1]}{\left|\int_{0}^{x}\int_{0}^{y}(F_{P}-F_{Q})(s,t)dsdt\right|}\leq\\\leq\sup_{(x,y)\in[0,1]\times[0,1]}{\int_{0}^{x}\int_{0}^{y}|(F_{P}-F_{Q})(s,t)|dsdt}\leq\\\leq\sup_{(x,y)\in[0,1]\times[0,1]}{\int_{0}^{x}\int_{0}^{y}\parallel P-Q\parallel_{\mathcal{F}}dsdt}\leq\parallel P-Q\parallel_{\mathcal{F}}
\end{multline}

which implies $T_{3}$ is continuous and this proves the first part. 

The same technique can be used to prove the second part, using triangular inequality as we did to prove the second part of Theorem\ref{mioSD1}.
\end{proof}

Once we established the result, we can follow the same reasoning as in Sec.\ref{SD1} to obtain data dependent critical values for the sequence of tests, which turns out to be consistent in the same sense as before. 

\section{Further Generalizations and Discussion of Applications}

There are two obvious ways in which we could want to generalize our results. On the one hand, for higher orders of dominance in the bivariate case, and on the other hand, for the case with more than two variables.

There are not much difficulties in doing so, at least formally. We can as \cite{perez1} define a sequence of operators $T_{j}$ over the space $\mathit{l}^{\infty}(\mathcal{F})$ and proceed as before to show that those operators are continuous. 

For stochastic dominance in higher dimensions we can also generalize modularity classes in a direct way, just by imposing conditions on the sign of higher order partial derivatives. If we can get around the complexity of the great number of integrals that the use of the integration by parts formula would imply even in the three dimensional case, we could obtain sufficient conditions generalizing those in \cite{hadarrussell74}, \cite{atkinson82} and \cite{perez2}.

But the key point here is not of formal but if practical kind. Both in higher orders of dominance and in higher dimensions we are dealing with an increasing number of multivariate integrals. The problem arises then as a manifestation of the well known \textit{curse of dimensionality}: as we increase the number of integrals involved in our tests, the size of the data sets needed for efficient testing grows exponentially. 

As we are dealing with bootstrap re-sampling, this in turn implies that the amount of computations involved rapidly grows. 

So in most practical applications at the time we have to content ourselves with first and second order bivariate stochastic dominance. As shown in \cite{perez1} this suffices for interesting applications in the field of welfare economics, such as the interaction between human capital and income and its impact on two-dimensional economic inequality.


\begin{thebibliography}{}

\bibitem[\protect\citeauthoryear{Atkinson}{1970}]{atkinson70} 
Atkinson, A. B. (1970) On the measurement of Inequality. 
\textit{Journal of Economic Theory}, \textbf{2}, 244--263.

\bibitem[\protect\citeauthoryear{Atkinson and Bourguignon}{1982}]{atkinson82}
Atkinson, A. B. and Bourguignon, F. (1982) The Comparison of 
Multi-Dimensioned Distributions of Economic Status. 
\textit{The Review of Economic Studies}, \textbf{49}(2), 183--201.

\bibitem[\protect\citeauthoryear{Barret and Donald}{2003}]{barret1} 
Barret, G. and Donald, S. (2003) Consistent Tests for Stochastic Dominance. 
\textit{Econometrica}, \textbf{73}(1), 71--104.

\bibitem[\protect\citeauthoryear{Barret and Donald}{2004}]{barret2} 
Barret, G. and Donald, S. (2004) Consistent Nonparametric Tests 
for Lorenz Dominance. 
In  \textit{Econometric Society, Australasian Meeting 2004}.

\bibitem[\protect\citeauthoryear{Bickel}{1969}]{bickel} 
Bickel, P. (1969) A Distribution Free Version of the
Smirnov Two Sample Test in the p-Variate Case. 
\textit{Ann. Math. Statist.}, \textbf{40}(1), 1--23.

\bibitem[\protect\citeauthoryear{Billingsley}{1968}]{billingsley}
Billingsley, P. (1968) \textit{Convergence of
Probability Measures}, 2nd edn. New York: Wiley.

\bibitem[\protect\citeauthoryear{Caba\~na and Caba\~na}{1997}]{cabana} 
Caba\~na, A. and Caba\~na, E. (1997) Transformed Empirical Processes and 
Modified Kolmogorov-Smirnov Statistics for Multivariate Distributions. 
\textit{The Annals of Statistics}, \textbf{25}(6), 2388--2409.

\bibitem[\protect\citeauthoryear{Davidson, R. and Duclos}{2000}]{davidson} 
Davidson, R. and Duclos, J-Y. (2000) Statistical Inference for 
Stochastic Dominance and for the Measurement of Poverty 
and Inequality, \textit{Econometrica}, \textbf{68}(6), 1435--1464. 

\bibitem[\protect\citeauthoryear{Dudley}{1999}]{dudleyUCLT} 
Dudley, R.(1999) \textit{Uniform Central Limit Theorems}. 
Cambridge Studies in Advanced Mathematics 63, Cambridge University Press.

\bibitem[\protect\citeauthoryear{Dudley}{2003}]{dudleyRAP} 
Dudley, R.(2003) \textit{Real Analysis and Probability}. 
Cambridge Studies in Advanced Mathematics 74, Cambridge University Press.

\bibitem[\protect\citeauthoryear{Friedman and Rafsky}{1979}]{friedman}
Friedman, J. and Rafsky, L.(1979) Multivariate Generalization of the 
Wald-Wolfowitz and Smirnov Two Sample Tests. 
\textit{The Annals of Statistics}, \textbf{7}(4), 697--717. 

\bibitem[\protect\citeauthoryear{Hadar and Russell}{1969}]{hadarrussell69} 
Hadar, J. and Russell, W. R. (1969) Rules for Ordering Uncertain Prospects. 
\textit{Amer. Econ. Rev.}, \textbf{59}, 25--34.

\bibitem[\protect\citeauthoryear{Hadar and Russell}{1974}]{hadarrussell74} 
Hadar, J. and Russell, W. R. (1974) 
Stochastic Dominance in Choice under Uncertainty. 
In \emph{Essays on Economic Behavior under Uncertainty}, 
M. S. Balch, D. L. McFadden and Y. Wu (Eds.). Amsterdam: North Holland.

\bibitem[\protect\citeauthoryear{Horv\'ath, Kokoszka and Zitikis}{2006}]{horvath} 
Horv\'ath, L., Kokoszka, P. and Zitikis, R. (2006) 
Testing for Stochastic Dominance Using the Weighted McFadden Type 
Statistic. \textit{Journal of Econometrics}, \textbf{133}(1), 191--205.

\bibitem[\protect\citeauthoryear{McFadden}{1989}]{mcfadden} 
McFadden, D. (1989) Testing for Stochastic Dominance. 
In Fomby, Th. B and Tae Kun Seo (Eds.), 
\textit{Studies in the Economics of Uncertainty. 
In Honor of Josef Hadar}. 
New York: Springer, 1989. 

\bibitem[\protect\citeauthoryear{Perez}{2015}]{perez1}
Perez, L. A. (2015) \textit{Tests de Dominancia 
Estoc\'astica en Base a Estad\'isticos de Kolmogorov-Smirnov 
Multivariados, con Aplicaciones a la Desigualdad Econ\'omica 
Multidimensional}, Ph.D. Dissertation, University of Buenos Aires. 

\bibitem[\protect\citeauthoryear{Perez}{2018}] {perez2} 
Perez, L. A. (2018) Modularity Classes and Boundary Effects 
in Multivariate Stochastic Dominance. 
\textit{Cuadernos del Cimbage}, accepted, upcoming. 

\bibitem[\protect\citeauthoryear{Schmid}{1958}]{schmid}
Schmid, P. (1958) On the Kolmogorov and Smirnov Limit 
Theorems for Discontinuous Distribution Functions. 
\textit{Ann. Math. Stat.}, \textbf{29}(24), 1011--1027.

\bibitem[\protect\citeauthoryear{Sen}{1973}]{sen}
Sen, P.K.(1973) An Almost Sure Invariance Principle 
for Kolmogorov Smirnov Statistics. 
\textit{The Annals of Probability}, \textbf{1}(3), 488--496.

\bibitem[\protect\citeauthoryear{Simpson}{1951}]{simpson}
Simpson, P. (1951) Note on the Estimation of a Bivariate 
Distribution Function. 
\textit{Ann. Math. Statist.},  \textbf{22}(3), 476--478.

\bibitem[\protect\citeauthoryear{Topkis}{1998}]{topkis} 
Topkis, D. (1998) \emph{Supermodularity and Complementarity}. 
Princeton University Press.

\bibitem[\protect\citeauthoryear{van der Vaart and Wellner}{2000}]{vanderV} 
van der Vaart W. and Wellner, J. (2000) 
\textit{Weak Convergence and Empirical Processes. 
With Applications to Statistics}. 
Springer Series in Statistics, New York: Springer.

\end{thebibliography}
\end{document}